\newtheorem{prop}{{\bf Proposition}}[section]
\newtheorem{coro}[prop]{{\bf Corollary}}
\newtheorem{lemma}[prop]{{\bf Lemma}} 
\newtheorem{theor}[prop]{{\bf Theorem}} 
\newtheorem{ex}[prop]{{\bf Example}}
\newcommand{\ad}{\mathrm{ad\,}}
\newcommand{\tr}{\mathrm{tr}}
\newcommand{\Der}{\mathrm{Der}}
\DeclareMathOperator{\Rad}{Rad}
\begin{document}
\title[On Derived Lie algebras]{On Derived Lie algebras}
\author{Salvatore Siciliano}
\address{Dipartimento di Matematica e Fisica ``Ennio De Giorgi", Universit\`{a} del Salento,
Via Provinciale Lecce--Arnesano, 73100--Lecce, Italy}
\email{salvatore.siciliano@unisalento.it}
\author{David A. TOWERS}
\address{Lancaster University\\
School of Mathematical Sciences \\
LA$1$ $4$YF Lancaster\\
ENGLAND}
\email{d.towers@lancaster.ac.uk}

\subjclass[2010]{17B05; 17B30; 17B40; 20F14; 22E15}
\keywords{Lie algebra; derived algebra; Lie group; composition series; characteristically nilpotent; filiform algebra; complete algebra; solvable ring}

\maketitle

\begin{abstract} 
In this paper we investigate the problem of which Lie algebras appear as the derived algebra of a Lie algebra. We present new results that further develop this study and address two questions raised in a paper concerned with the corresponding problem for groups.
\end{abstract}

\section{Introduction}
Questions concerning which groups can or cannot appear as the derived subgroup of a group have been studied by many authors since Bernhard Neumann first raised the question in 1956. Groups which can so appear have been called {\it integrable}, {\it $C$-groups} or {\it commutator-realizable groups}, and a survey with many new results is given by  Ara\'{u}jo, Cameron, Casolo and Matucci in \cite{accm}. Similar questions concerning the derived algebra of a Lie algebra have also been raised (see \cite{bemn}, \cite{chao}, \cite{green}, \cite{stit} and the references therein, for example). The purpose of this paper is to carry this study further and to answer two of the questions raised in \cite{accm}.
\par

Throughout the paper, all Lie algebras $L$ are assumed to be finite-dimensional over a field $F$. No assumptions will be made on $F$ other than those specified. The {\it derived algebra} of $L$, $L^2=[L,L]$, is spanned by all products of the form $[x,y]$ with $x,y\in L$; $L$ is called {\it perfect} if $L^2=L$. If $S$ is a subset of $L$, the \emph{centraliser} of $S$ in $L$ is $C_L(S)=\{ x\in L \mid [x,H]=0\}$; the \emph{ centre} of $L$ is $Z(L)=C_L(L)$. We denote by $\Rad(L)$  the \emph{radical} of $L$, that is, the maximal solvable ideal of $L$. The terms of the descending central series are defined by $L^1=L$  and $L^i=[L^{i-1},L]$ for every $i>1$.  We denote by $L^{(i)}$ the $i$th term of the derived series of a Lie algebra $L$; that is, $L^{(0)}=L$ and, for $i \geq 1$, $L^{(i)}=[L^{(i-1)}, L^{(i-1)}]$.
\par

A \emph{composition series} for $L$ is a sequence $$0=L_0 \subset L_1\subset \ldots  \subset L_n=L,$$ where $L_i$ is a maximal ideal of $L_{i+1}$ for $0\leq i\leq n-1$; the factors $L_{i+1}/L_i$ are called \emph{composition factors}. We shall call $L$ \emph{ supersolvable} if there is a chain $$0=L_0 \subset L_1 \subset \ldots \subset L_{n-1} \subset L_n=L,$$ where $L_i$ is an $i$-dimensional ideal of $L$. An ideal $I$ of $L$ is called \emph{characteristic} if it is invariant under all derivations of $L$. We shall denote algebra direct sums by $\oplus$, whereas direct sums of the vector space structure alone will be written as $\dot{+}$. Other notation and terminology will be taken from \cite{jac}. 
\par

In Section 2 we provide some necessary conditions for a Lie algebra to be a derived algebra and use them to show that a complete Lie algebra is a derived algebra if and only if it is perfect, and to produce a necessary and sufficient condition for an almost abelian Lie algebra to be a derived algebra. We also show that a non-abelian Lie algebra whose centre is one-dimensional cannot be the derived algebra of a nilpotent algebra, but that it can be the derived algebra of a solvable algebra. We give an account of some known results concerning characteristically nilpotent Lie algebras. In addition, we collect together some straightforward preliminary results that are used later.
\par

Lie algebras with trivial centre are considered in Section 3.  Lie algebras that are the derived algebra of another algebra $H$ with $C_H(L)=0$ are also studied. In Section 4 we characterise those Lie algebras with a composition series of length 2, 3 or 4 and which are derived algebras. 
\par

Section 5 addresses two problems that have been raised in \cite{accm}. The first of these is Problem 10.20 which asks whether it is true that a Lie algebra is integrable if and only if the corresponding Lie group is. We show that this is true if the group is simply connected and has a simply connected integral. The second asks about the corresponding problem for associative rings, raised as Problem 10.21 in \cite{accm}. It is shown that the situation for rings is fundamentally different from that for groups or Lie algebras.

\section{Derived Lie algebras}

Our first result provides some necessary conditions for a Lie algebra to be a derived algebra. If $I$ is an ideal of a Lie algebra $L$ and $x\in L$, we shall write $\ad_I x$ to denote the restriction to $I$ of the adjoint map $\ad x : L\rightarrow L, h\mapsto [x,h]$. 

\begin{theor}\label{necessary}
Let $L$ be a Lie algebra over a field $F$. If $L$ is a derived algebra of some Lie algebra $H$, then the following properties hold:
\begin{enumerate} 
\item[(i)] $H$ can be chosen to be finite-dimensional;
\item[(ii)] every one-dimensional characteristic ideal is contained in $Z(L)$;
\item[(iii)] $\ad (L) \subseteq  \Der(L)^2$.
\end{enumerate} 
\end{theor}
\begin{proof} (i) Suppose that $L$ is the derived algebra of the Lie algebra $\tilde{H}$. Then $\tilde H^2=L$ is finite-dimensional, so there exist $x_1,y_1,\ldots,x_n,y_n\in H$ such that $[x_1,y_1],\ldots,[x_n,y_n]$ form a basis of $L$. Let $V$ be the subspace of $\tilde H$ spanned by  $x_1,y_1,\ldots,x_n,y_n$ and put $H=V+\tilde H^2$. Then $H$ is a finite-dimensional subalgebra of $\tilde H$ with $H^2=L$, as desired. 

(ii) Suppose, by contradiction, that there exists a one-dimensional characteristic ideal $I$ with $I\not\subseteq Z(L)$. Then $I$  is an ideal of $H$ and the set consisting of all $\ad_I h$, with $h\in H$, is a subalgebra of  $\Der(I)$.   Consequently, as $\Der(I)$ is abelian, for all $h_1,h_2\in H$ and  $x\in L$ one has
$$
[[h_1,h_2],x]=[\ad_Ih_1,\ad_Ih_2](x)=0.
$$
It follows that $[L,I]=[[H,H],I]=0$ and so $I\subseteq Z(L)$, a contradiction.

(iii) Consider the Lie algebra homomorphism $\varphi: H \longrightarrow \Der(L)$, $h \mapsto \ad_Lh$. Then $\ker \varphi=C_H(L)$ and we can identify $L/C_H(L)$ with its isomorphic image in $\Der(L)$. Moreover, we have $C_H(L)\cap L=Z(L)$ and so
$$
\left(\frac{H}{C_H(L)}\right)^2=\frac{H^2+C_H(L)}{C_H(L)}=\frac{L+C_H(L)}{C_H(L)}\cong \frac{L}{L\cap C_H(L)}=\frac{L}{Z(L)}\cong \ad(L).
$$ 
Therefore $\ad(L)$ is the derived algebra of $H/C_H(L)$, which is contained in $\Der(L)$. Moreover, we have
$$
\ad(L)=\left(H/C_H(L)\right)^2\subseteq \Der(L)^2,
$$
yielding the claim.
\end{proof}

We recall that a Lie algebra $L$ is said to be {\it complete} if $Z(L)=0$ and $\Der(L)=\ad(L)$. By the previous result we have that

\begin{coro} Let $L$ be a complete Lie algebra over any field. Then $L$ is a derived algebra if and only if it is perfect.
\end{coro}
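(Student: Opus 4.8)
The plan is to prove the two implications separately, with the forward direction resting entirely on part (iii) of Theorem~\ref{necessary} together with the two defining features of completeness, namely $Z(L)=0$ and $\Der(L)=\ad(L)$.

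For the easy direction, suppose $L$ is perfect. Then $L=L^2=[L,L]$, so $L$ is the derived algebra of itself and hence a derived algebra. No appeal to completeness is needed here.

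For the converse, suppose $L$ is a derived algebra and complete. Since $Z(L)=0$, the adjoint map $\ad\colon L\to\Der(L)$ is injective, so it is an isomorphism onto $\ad(L)$. Under this map the bracket is transported faithfully, $[\ad x,\ad y]=\ad[x,y]$, so the derived algebra of $\ad(L)$ is
$$
\ad(L)^2=[\ad(L),\ad(L)]=\ad([L,L])=\ad(L^2).
$$
I would then invoke part (iii) of Theorem~\ref{necessary}, which gives $\ad(L)\subseteq\Der(L)^2$. Because $L$ is complete I may replace $\Der(L)$ by $\ad(L)$, obtaining $\ad(L)\subseteq\ad(L)^2=\ad(L^2)$. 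Finally, the injectivity of $\ad$ lets me remove it from both sides to conclude $L\subseteq L^2$, whence $L=L^2$ and $L$ is perfect.

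The argument is short and I anticipate no genuine obstacle; the two points to handle carefully are the identification $\Der(L)^2=\ad(L^2)$, which combines the completeness hypothesis $\Der(L)=\ad(L)$ with the homomorphism property of $\ad$, and the final cancellation, which is legitimate precisely because $Z(L)=0$ forces $\ad$ to be injective.
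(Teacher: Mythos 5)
Your proof is correct and follows exactly the route the paper intends: the corollary is stated there as an immediate consequence of Theorem~\ref{necessary}(iii), with completeness ($Z(L)=0$ and $\Der(L)=\ad(L)$) used precisely as you use it to get $\ad(L)\subseteq\ad(L^2)$ and then cancel the injective map $\ad$. The easy direction (a perfect algebra is its own derived algebra) is also the standard observation, so nothing more needs to be said.
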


\medskip

A Lie algebra $L$ is called \emph{almost abelian} if $L = Fx \dot{+} A$, where $A$ is an abelian ideal of $L$ and $\ad(x)$ acts as the identity map on $A$. Another immediate consequence of Theorem \ref{necessary} is the following

\begin{coro}\label{almabel} Let $L$ be an almost abelian Lie algebra of dimension $n$ over a field $F$. Then $L$ is a derived algebra if and only if the characteristic of $F$ divides $n-1$.
\end{coro}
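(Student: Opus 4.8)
The plan is to reduce everything to the single necessary condition (iii) of Theorem \ref{necessary}, namely $\ad(L)\subseteq\Der(L)^2$, after computing $\Der(L)$ explicitly. Write $L=Fx\,\dot{+}\,A$ with $[x,a]=a$ for $a\in A$ and $A$ abelian, so $\dim A=n-1$. A direct computation gives $L^2=A$, whence $A$ is a characteristic ideal and every derivation preserves it. First I would show that every $D\in\Der(L)$ sends $x$ into $A$: applying $D$ to $[x,a]=a$ and using $D(A)\subseteq A$ forces the coefficient of $x$ in $D(x)$ to vanish (here $n\geq 2$). Conversely, any pair $(w,T)$ with $w=D(x)\in A$ and $T=D|_A\in\mathrm{End}(A)$ arbitrary defines a derivation, so $\Der(L)$ is isomorphic to the semidirect product $A\rtimes\mathfrak{gl}(A)$, with bracket $[(w_1,T_1),(w_2,T_2)]=(T_1w_2-T_2w_1,[T_1,T_2])$. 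In this notation $\ad x=(0,\mathrm{id}_A)$ and $\ad a=(-a,0)$.

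From the bracket formula, every element of $\Der(L)^2$ has the form $(w,S)$ where $S$ is a sum of commutators in $\mathrm{End}(A)$, hence $\tr S=0$; conversely one checks $\Der(L)^2=\{(w,S):w\in A,\ \tr S=0\}$ (the first coordinate already fills $A$ by taking $T_2=0$, $w_1=0$, and the second fills $\mathfrak{sl}(A)$ since commutators span it). For the necessity direction, suppose $L$ is a derived algebra. By Theorem \ref{necessary}(iii) we have $\ad x=(0,\mathrm{id}_A)\in\Der(L)^2$, which forces $\tr(\mathrm{id}_A)=n-1=0$ in $F$, that is, $\mathrm{char}(F)\mid(n-1)$.

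For sufficiency, assume $\mathrm{char}(F)\mid(n-1)$, so $n-1=0$ in $F$; either $n=1$ (the one-dimensional abelian algebra, which is trivially a derived algebra, e.g.\ of the Heisenberg algebra) or $\dim A=n-1\geq 2$, which we now assume. I would construct $H$ by adjoining two elements $y_1,y_2$ whose commutator produces $x$. The main obstacle, and the crux of the whole proof, is that getting $x$ genuinely into a derived algebra requires realizing $\mathrm{id}_A$ as an honest commutator of endomorphisms of $A$; this is possible precisely because $\mathrm{id}_A$ has trace $0$, by the classical theorem (Albert--Muckenhoupt) that over any field every trace-zero matrix is a single commutator. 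Choosing $T_1,T_2\in\mathrm{End}(A)$ with $[T_1,T_2]=\mathrm{id}_A$, set $H=L\,\dot{+}\,Fy_1\,\dot{+}\,Fy_2$ with $[y_i,x]=0$, $[y_i,a]=T_i(a)$ for $a\in A$, and $[y_1,y_2]=x$. Then $\ad y_i|_L$ is the derivation $(0,T_i)$, so the mixed Jacobi identities $(y_i,\ell_1,\ell_2)$ hold automatically, while the identity $(y_1,y_2,\ell)$ reduces exactly to $\ad x|_L=[(0,T_1),(0,T_2)]=(0,\mathrm{id}_A)$, which holds by the choice of $T_1,T_2$; the remaining triples among the $y_i$ vanish since $[y_i,x]=0$. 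Finally $H^2=[x,A]+[y_1,A]+[y_2,A]+F[y_1,y_2]=A+Fx=L$, so $L$ is a derived algebra, completing the proof. (Note that a single adjoined element cannot suffice, since $\ad y|_L\in\Der(L)$ already maps $L$ into $A$, so two new generators are genuinely needed.)
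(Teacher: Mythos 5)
Your proof is correct and follows essentially the same route as the paper: necessity comes from Theorem \ref{necessary}(iii) together with the observation that every element of $\Der(L)^2$ has trace zero while $\tr(\ad x)=n-1$, and sufficiency comes from writing $\mathrm{id}_A$ as a commutator $[T_1,T_2]$ of endomorphisms of $A$ (the same linear-algebra fact the paper invokes) and adjoining two corresponding elements to produce an $(n+2)$-dimensional integral. The only cosmetic difference is that you construct $H$ abstractly as $L\dot{+}Fy_1\dot{+}Fy_2$ and check the Jacobi identity by hand, whereas the paper realizes the isomorphic algebra inside $\Der(L)$ as the span of $\ad(L)$, $D_1$, $D_2$, using $L\cong\ad(L)$ since $Z(L)=0$.
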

\begin{proof} If $L$ is a derived algebra, then by Theorem \ref{necessary} we must have $\ad (L) \subseteq  \Der(L)^2$. In particular, all inner derivations of $L$ have trace zero. As $\tr(\ad x)=n-1$, it follows that the characteristic of $F$ divides $n-1$. 

We now show that the condition is sufficient. Assume then the characteristic $p$ of $F$ divides $n-1$.   Let $x,a_1,\ldots,a_{n-1}$ a basis of $L$ such that $[x,a_i]=a_i$ and $[a_i,a_j]=0$ for all $i,j=1,\ldots,n-1$. As $L$ is centreless we have $L\cong \ad(L)$, so it is enough to show that $\ad(L)$  is  a derived algebra. Note that every linear transformation $D$ of $L$ with $D(x)=0$ and $\mathrm{Im}(D)\subseteq A$ is a derivation of $L$. Let $A$ be the vector subspace spanned by the elements $a_i$, which clearly coincides with the abelian ideal of codimension 1 of $L$. Since $p$ divides $n-1=\dim A$, a well-known result of elementary linear algebra assures that the identity map $\mathrm{id}_A$ of $A$ can be expressed as the Lie commutator of two linear transformations $f_1$ and $f_2$ of $A$. Consider the linear transformations $D_1$ and $D_2$ of $L$ vanishing on $x$ and agreeing with $f_1$ and $f_2$ on $A$, respectively. As $A$ is abelian, we have $[\ad a_i,D_1](a)=[\ad a_i,D_2](a)=0$ for every $a\in A$ and for all $i=1,\ldots,n$. Now, it is easy to see that every linear map  $f:L\rightarrow A$ vanishing on $A$ is a linear combinations of the inner derivations $\ad a_i$.  Therefore  $[\ad a_i,D_j]\in \ad(L)$ for all $i=1,\ldots,n$ and $j=1,2$.  It follows that the subspace $H$ of $\Der(L)$ spanned by $D_1,D_2,[D_1,D_2]$ and the inner derivations of $L$ is a subalgebra of $\Der(L)$. From what has been observed, it is clear that $H^2\subseteq \ad(L)$. On the other hand, we have $[D_1,D_2]=\ad x$ and $[\ad x, \ad a_i]=\ad a_i$ for all $i$, which proves the opposite inclusion, yielding the desired conclusion.
\end{proof}

\begin{lemma}\label{rad} Let $L$ be a derived Lie algebra over a field of characteristic zero and $I$ the smallest term of the derived series of $L$. Then $\Rad(L)$ and $L/I$ are both nilpotent.
\end{lemma}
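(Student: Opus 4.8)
The plan is to reduce everything to the single claim that $\Rad(L)$ is nilpotent, and then to deduce the nilpotency of $L/I$ almost for free. By Theorem \ref{necessary}(i) I may assume $L=H^2$ with $H$ finite-dimensional. Since we are in characteristic zero, $L$ admits a Levi decomposition $L=S\,\dot{+}\,\Rad(L)$ with $S$ semisimple. As $S$ is perfect, an easy induction ($S=[S,S]\subseteq[L^{(k)},L^{(k)}]$) shows $S\subseteq L^{(k)}$ for every $k$, hence $S\subseteq I$. Consequently the composite $\Rad(L)\hookrightarrow L\twoheadrightarrow L/I$ is surjective, so $L/I$ is a homomorphic image of $\Rad(L)$; once $\Rad(L)$ is known to be nilpotent, $L/I$ is nilpotent as a quotient of a nilpotent algebra.

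It therefore remains to prove that $\Rad(L)$ is nilpotent, and here I would first locate the radical of $L$ inside $H$. Since $\Rad(L)$ is a characteristic ideal of the ideal $L=H^2$ of $H$, the restrictions to $L$ of the maps $\ad_H h$ are derivations of $L$, so $\Rad(L)$ is an ideal of $H$; being solvable it lies in $\Rad(H)$, while conversely $L\cap\Rad(H)$ is a solvable ideal of $L$ and so lies in $\Rad(L)$. Thus $\Rad(L)=H^2\cap\Rad(H)$. Next, writing a Levi decomposition $H=S_H\,\dot{+}\,R$ with $R=\Rad(H)$, one computes $H^2=S_H\,\dot{+}\,([S_H,R]+R^2)$, whence intersecting with $R$ (and using $S_H\cap R=0$) gives
$$
\Rad(L)=H^2\cap R=[S_H,R]+R^2=[H,\Rad(H)].
$$

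The argument is then completed by the standard structural fact, valid in characteristic zero, that $[H,\Rad(H)]$ is contained in the nilradical $\mathrm{Nil}(H)$, the largest nilpotent ideal of $H$ (see \cite{jac}). Indeed this yields $\Rad(L)\subseteq\mathrm{Nil}(H)$, and a subalgebra of a nilpotent Lie algebra is nilpotent, so $\Rad(L)$ is nilpotent, as required. The one external ingredient, and the step I expect to carry the real weight, is the inclusion $[H,\Rad(H)]\subseteq\mathrm{Nil}(H)$: this is precisely where the hypothesis $\mathrm{char}\,F=0$ is indispensable, since it rests on Lie's theorem (simultaneous triangularisation of the solvable algebra $\ad(\Rad H)$ over the algebraic closure) together with the characterisation of the nilradical via ad-nilpotent elements of the radical. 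It is worth noting that the trace condition of Theorem \ref{necessary}(iii) plays no direct role in this proof; the conclusion follows purely from $L=H^2$ together with finite-dimensionality.
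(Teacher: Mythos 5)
Your proof is correct. For the first (and main) assertion you follow essentially the same route as the paper: a Levi decomposition $H=S_H\dot{+}\Rad(H)$, the computation $H^2\cap\Rad(H)=[S_H,\Rad(H)]+\Rad(H)^2=[H,\Rad(H)]$, and the characteristic-zero theorem that $[H,\Rad(H)]$ lies in the nilradical of $H$ (the paper's citation \cite[Section II.7, Theorem 13]{jac}); in fact you supply details the paper elides, namely the identification $\Rad(L)=H^2\cap\Rad(H)$ via the fact that the radical is a characteristic ideal of $L$ in characteristic zero. Where you genuinely diverge is the second assertion. The paper observes that $I$, being the smallest term of the derived series, is a characteristic ideal of $L$, hence an ideal of $H$ with $(H/I)^2=L/I$; thus $L/I$ is itself a solvable derived algebra, and the first part applied to $L/I$ gives its nilpotency. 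You instead take a Levi decomposition $L=S\dot{+}\Rad(L)$, note that the perfect subalgebra $S$ satisfies $S\subseteq L^{(k)}$ for all $k$ and hence $S\subseteq I$, so that $L=\Rad(L)+I$ and $L/I$ is a homomorphic image of the nilpotent algebra $\Rad(L)$. Your route is slightly more economical, since it needs neither the verification that $I$ is characteristic nor a second appeal to the derived-algebra machinery; the paper's route, on the other hand, isolates the observation that $L/I$ is again a derived algebra, which is a reusable fact in the spirit of the rest of the paper. One cosmetic quibble: Theorem \ref{necessary}(iii) is the inclusion $\ad(L)\subseteq\Der(L)^2$, not a trace condition (the trace statement is only a consequence used in Corollary \ref{almabel}), but this aside plays no role in your argument.
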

\begin{proof} Let $H$ be a Lie algebra such that $L=H^2$. By Levi's Theorem we have $H=\Rad(L)\dot{+}S$, where $S$ is a semisimple subalgebra of $L$. It follows that $L=H^2=\Rad(L)^2+[\Rad(L),S]+S$ and so, by \cite[Section II.7, Theorem 13]{jac}, we deduce that the radical of $L$ is $\Rad(L)^2+[\Rad(L),S]\subseteq N$, where $N$ is the nilradical of $L$. 

In order to prove the second assertion, note that $L/I$ is solvable. Moreover, as $I$ is a characteristic ideal of $L$, we have that $I$ is an ideal of $H$ and $[H/I,H/I]=L/I$. In particular, $L/I$ is a derived Lie algebra and so it is nilpotent by the first part.
\end{proof}
\medskip

Clearly, then, every non-nilpotent solvable Lie algebra over a field of characteristic zero cannot be a derived algebra. However, that is not the case in characteristic $p>0$. We call a Lie algebra {\it completely solvable} if its derived algebra is nilpotent. In characteristic zero, all solvable Lie algebras are completely solvable (by Lie's Theorem), but that is not the case in positive characteristic: for examples see \cite{bt}. Over an algebraically closed field, it was proved in \cite{dzu} that a Lie algebra is completely solvable if and only if it is supersolvable, so that a nilpotent algebra over such a field can only be the derived algebra of a supersolvable algebra.
\par

The following follows from \cite[Theorem 1]{chao}.

\begin{theor} A non-abelian Lie algebra $L$ whose centre is one-dimensional cannot be the derived algebra of a nilpotent algebra.
\end{theor}
\medskip

However, it is possible for a non-abelian Lie algebra whose centre is one-dimensional to be the derived algebra of a solvable algebra, as the following examples show.



\begin{ex}\label{ex1}
\emph{Let $F_n$ denote the  \emph{nth standard filiform Lie algebra} over a field of characteristic zero with basis $ e_1, \ldots , e_n$ and non-zero brackets $ [e_1, e_i] = e_{i+1}$ for all $i=2,\ldots ,n-1$. Then $F_n$ is the derived algebra of the Lie algebra $L_n$ with basis  $ e,e_1, \ldots , e_n$ and 
non-zero products  $ [e, e_i] = ie_i$ for all $i=1,\ldots ,n$, and $ [e_1, e_i] = e_{i+1}$ for all $i=2,\ldots ,n-1$.}
\end{ex}

\begin{ex}\label{ex2}
\emph{Let $H_n$ denote the \emph{nth Heisenberg Lie algebra} over a field of characteristic different from two with basis $ a_1,b_1 \ldots , a_n,b_n,z$ and non-zero brackets $ [a_i, b_i] = z$ for all $i=1,\ldots ,n$. Then $H_n$ is the derived algebra of the Lie algebra $L_n$ with basis  $ e,a_1,b_1 \ldots , a_n,b_n,z$ and 
non-zero products  $ [e, a_i] = a_i$, $[e,b_i]=-b_i$, $ [a_i, b_i] = z$, for all $i=1,\ldots ,n$.}
\end{ex}

The following follows from \cite[Corollary]{bokut}.
\begin{theor} A non-abelian Lie algebra $L$ with $\dim L/L^2\leq 3$ cannot be the derived algebra of a nilpotent algebra.
\end{theor}

\begin{lemma}\label{dsum} Let $L=L_1\oplus L_2$, where $L_1, L_2$ are ideals of $L$. Then
\begin{itemize}
\item[(i)] if $L_1$ and $L_2$ are derived algebras, so is $L$;
\item[(ii)] if $L$ is a derived algebra and $L_1$ is centre-free, $L_1$ is a derived algebra.
\end{itemize}
\end{lemma}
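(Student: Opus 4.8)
The plan for (i) is immediate. Choose Lie algebras $H_1, H_2$ with $H_1^2 = L_1$ and $H_2^2 = L_2$, and form the algebra direct sum $H = H_1 \oplus H_2$. Since $H_1, H_2$ are ideals with $[H_1, H_2] = 0$, one computes $H^2 = [H_1 \oplus H_2,\, H_1 \oplus H_2] = H_1^2 \oplus H_2^2 = L_1 \oplus L_2 = L$, so $L$ is a derived algebra. By Theorem \ref{necessary}(i) the $H_i$ may be taken finite-dimensional, and then so is $H$.

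For (ii) I would realise $L_1$ as the derived algebra of a quotient of the given $H$ with $L = H^2$. Since a derived algebra is always an ideal, $L$ is an ideal of $H$, and hence for each $h \in H$ the restriction $\ad_L h$ of the inner derivation $\ad h$ is a derivation of $L$. The key step is to show that $L_2$ is a \emph{characteristic} ideal of $L$. Granting this, $L_2$ is invariant under every $\ad_L h$, so $L_2$ is an ideal of $H$, and then
$$
(H/L_2)^2 = (H^2 + L_2)/L_2 = L/L_2 = (L_1 \oplus L_2)/L_2 \cong L_1,
$$
which exhibits $L_1$ as a derived algebra.

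The main obstacle, and the only place where the hypothesis $Z(L_1) = 0$ is used, is establishing that $L_2$ is characteristic. I would argue directly as follows. Let $D \in \Der(L)$ and $z \in L_2$, and write $D(z) = w_1 + w_2$ with $w_i \in L_i$ relative to the decomposition $L = L_1 \dot{+} L_2$. For every $x \in L_1$ we have $[z, x] = 0$ since $[L_1, L_2] = 0$; applying $D$ and expanding via the Leibniz rule gives $[w_1, x] + [z, D(x)] = 0$. Because $[L_1, L_2] = 0$, the term $[z, D(x)]$ lies in $L_2$ while $[w_1, x]$ lies in $L_1$, so the two summands vanish separately; in particular $[w_1, x] = 0$ for all $x \in L_1$, whence $w_1 \in Z(L_1) = 0$. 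Thus $D(z) = w_2 \in L_2$, and as $D$ and $z$ were arbitrary we conclude $D(L_2) \subseteq L_2$, so $L_2$ is characteristic. The delicate bookkeeping point to check carefully is precisely this separation of the identity $[w_1, x] + [z, D(x)] = 0$ into its $L_1$- and $L_2$-components; once that is in place the remaining steps are formal.
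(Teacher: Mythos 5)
Your proof is correct and takes essentially the same approach as the paper's: part (i) via the algebra direct sum $H = H_1\oplus H_2$, and part (ii) by first showing that $L_2$ is a characteristic ideal of $L$ (using the Leibniz rule applied to $[x,z]=0$ together with $Z(L_1)=0$) and then passing to the quotient $H/L_2$. Your explicit component-wise separation of $[w_1,x]+[z,D(x)]=0$ is simply an unpacking of the paper's one-line claim that $D(y)\in C_L(L_1)\subseteq L_2$, so the two arguments are the same in substance.
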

\begin{proof} (i) Let $H_1, H_2$ be Lie algebras such that $L_1=H_1^2$ and $L_2=H_2^2$. Put $H=H_1\oplus H_2$. Then $H^2=H_1^2\oplus H_2^2= L_1\oplus L_2$.
\medskip

\noindent (ii) Let $D$ be a derivation of $L$ and let $y\in L_2$. Then, for every $x\in L_1$, $$0=D([x,y])=[x,D(y)]+[D(x),y],$$  
so $[x,D(y)]=0.$ It follows that $D(y)\subseteq C_L(L_1)\subseteq L_2$ since $L_1$ is centre-free. Hence $L_2$ is a characteristic ideal of $L$.
\par

Let $L=H^2$. Then $L_2$ is an ideal of $H$ and $L_2\subseteq H^2$. Thus
\[ \left( \frac{H}{L_2}\right)^2=\frac{H^2}{L_2}\cong L_1,
\] so $L_1$ is a derived algebra.
\end{proof}

\medskip

\begin{lemma}\label{nonsing} Let $L$ be a Lie algebra over any field. If there exists a derivation of $L$ inducing a non-singular linear transformation on $L/L^2$, then $L$ is a derived algebra. In particular, this is the case when $L$ admits a non-singular derivation.
\end{lemma}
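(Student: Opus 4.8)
The plan is to exhibit $L$ as the derived algebra of a one-dimensional extension built from the given derivation. Let $D$ be a derivation of $L$ whose induced transformation $\bar D$ on $L/L^2$ is non-singular. I would form the vector space $H=Fd\,\dot{+}\,L$, where $d$ is a new basis vector, and extend the bracket of $L$ to $H$ by declaring $[d,x]=D(x)$ for all $x\in L$ and $[d,d]=0$. Since $D$ is a derivation, the Jacobi identity carries over and $H$ becomes a Lie algebra (the semidirect sum of $Fd$ and $L$); this verification is routine and I would only indicate it.

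The main computation is then the derived algebra of $H$. Writing $h_i=\alpha_i d+x_i$ with $\alpha_i\in F$ and $x_i\in L$, one finds $[h_1,h_2]=\alpha_1 D(x_2)-\alpha_2 D(x_1)+[x_1,x_2]$, so that $H^2=D(L)+L^2$. Now the hypothesis that $\bar D$ is non-singular says, since $L$ is finite-dimensional, that $\bar D$ is surjective, which is precisely the statement $D(L)+L^2=L$. Therefore $H^2=L$, and $L$ is a derived algebra.

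For the final assertion I would specialise to a non-singular derivation $D$ of $L$. Every derivation satisfies $D(L^2)=D([L,L])\subseteq[D(L),L]+[L,D(L)]\subseteq L^2$, so $L^2$ is $D$-invariant and $D$ induces a well-defined map $\bar D$ on $L/L^2$. As $D$ is bijective on the finite-dimensional space $L$ it is in particular surjective, whence $\bar D$ is surjective and so non-singular, and the first part applies.

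The argument presents no serious obstacle; the only point requiring care is the translation of ``$\bar D$ non-singular on $L/L^2$'' into the surjectivity condition $D(L)+L^2=L$, together with the observation that finite-dimensionality makes surjectivity and non-singularity interchangeable here.
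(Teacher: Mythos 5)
Your proof is correct and follows essentially the same route as the paper: both form the semidirect extension $H=FD\,\dot{+}\,L$ (the paper writes $H=FD\oplus L$), compute $H^2=D(L)+L^2$, and use the fact that non-singularity of the induced map on the finite-dimensional space $L/L^2$ means surjectivity, so $H^2=L$. Your write-up merely spells out the details (the bracket computation and the reduction of the ``in particular'' clause) that the paper leaves implicit.
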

\begin{proof} Recall that $L^2$ is a characteristic ideal of $L$, so every derivation $D$ induces a well-defined linear transformation on $L/L^2$. If this linear transformation is non-singular, then for $H=FD\oplus L$ we have  $[H,H]=D(H)+L^2=L$, yielding the conclusion.
\end{proof}

An application of Lemma \ref{nonsing} yields the following:

\begin{prop}\label{suff} Let $L$  be a Lie algebra over a field $F$. In each of the following cases $L$ is a derived algebra:
\begin{enumerate} 
\item[(i)] $L$ is abelian;
\item[(ii)] $L$ is nilpotent of class 2;
\item[(iii)] $F$ has characteristic zero and  $\Rad(L)$ is abelian.
\end{enumerate} 
\end{prop}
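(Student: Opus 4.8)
The unifying strategy is to reduce all three cases to Lemma~\ref{nonsing}: in each case I will exhibit a derivation of $L$ whose induced map on $L/L^2$ is non-singular, and then the conclusion is immediate. For (i), since $L$ is abelian we have $L^2=0$, so \emph{every} linear endomorphism of $L$ is trivially a derivation; in particular the identity map is a non-singular derivation on $L/L^2=L$, and we are done.

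For (ii), fix a vector-space complement $V$ of $L^2$ in $L$, so that $L=V\dot{+}L^2$, and define $D$ to be the identity on $V$ and twice the identity on $L^2$. Because $L$ has nilpotency class at most $2$ we have $L^3=0$ and $[L^2,L^2]=0$, and a short bracket-by-bracket check (distinguishing whether the arguments lie in $V$ or in $L^2$) shows that $D$ is a derivation: the only nontrivial relation is $D[u,v]=2[u,v]=[Du,v]+[u,Dv]$ for $u,v\in V$, while every bracket involving a factor from $L^2$ vanishes. Since $D$ restricts to the identity on $V\cong L/L^2$, its induced map on $L/L^2$ is the identity, which is non-singular in any characteristic, and Lemma~\ref{nonsing} applies. (This incidentally recovers (i) as the degenerate case $L^2=0$.)

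Case (iii) is where the real work lies. Writing $R=\Rad(L)$, Levi's Theorem (valid since $\mathrm{char}\,F=0$) gives a decomposition $L=R\dot{+}S$ with $S$ semisimple, and by hypothesis $R$ is abelian. Viewing $R$ as an $S$-module under the adjoint action, Weyl's complete reducibility theorem yields $R=R_0\oplus R_1$, where $R_0=C_R(S)$ is the sum of the trivial submodules and $R_1$ is the sum of the nontrivial irreducible submodules; since $[S,M]=M$ for each nontrivial irreducible $M$ and $[S,R_0]=0$, one gets $[S,R]=R_1$. Using that $R$ is abelian, $R_0$ centralises all of $L$, so $R_0=Z(L)$, while $L^2=[L,L]=S\dot{+}[S,R]=S\dot{+}R_1$; hence $L/L^2\cong R_0$. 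I then define $D$ to be the identity on $R_0$ and zero on $R_1\dot{+}S$.

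The main obstacle is to verify that this $D$ is genuinely a derivation: one must check the Leibniz rule on all pairs drawn from the summands $R_0$, $R_1$, $S$. The decisive points are that $[R_0,L]=0$, so every term containing an $R_0$-argument collapses, and that $S$ and $R_1$ are carried by each bracket back into $L^2=S\dot{+}R_1$, on which $D$ vanishes; these observations make all the required identities reduce to $0=0$. Once $D$ is known to be a derivation, it induces the identity on $L/L^2\cong R_0$, which is non-singular, so Lemma~\ref{nonsing} completes the proof. (The degenerate possibility $R_0=0$ simply means $L$ is perfect, hence trivially a derived algebra.)
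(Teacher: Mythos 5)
Your proof is correct and follows the paper's overall strategy exactly: all three cases are reduced to Lemma~\ref{nonsing} by exhibiting an explicit derivation that is non-singular modulo $L^2$, and your cases (i) and (ii) coincide with the paper's (the paper's $D$ in (ii) is precisely your map: the identity modulo $L^2$ and multiplication by $2$ on $L^2$). In case (iii) you take a slightly different, and in fact more careful, route. The paper defines $D$ to vanish on the Levi factor $S$ and to act as the identity on the abelian radical $R$, and justifies non-singularity by asserting that $L^2=S$; that assertion is false whenever $[S,R]\neq 0$ (for instance $L=\mathfrak{sl}_2\ltimes F^2$ with the natural action is perfect, so $L^2=L$), although the paper's derivation still does the job, since it fixes every element of $R$ and so $x-D(x)\in S\subseteq L^2$ for all $x\in L$, i.e.\ $D$ induces the identity on $L/L^2$ no matter what $L^2\cap R$ is. Your argument avoids this imprecision: by Weyl's theorem you split $R=R_0\oplus R_1$ with $R_0=C_R(S)$ and $R_1=[S,R]$, identify $L^2=S\dot{+}R_1$ and $L/L^2\cong R_0$ exactly, and take $D$ to be the identity on $R_0$ and zero on $L^2=S\dot{+}R_1$; the Leibniz check is immediate because $R_0$ is central and $L^2$ is an ideal on which $D$ vanishes. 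The cost of your version is an extra appeal to complete reducibility, which the paper's choice of derivation does not need; the gain is a correct identification of $L/L^2$ and a derivation that is visibly diagonal with respect to it. Either derivation is acceptable, but your write-up is the one that would survive refereeing without amendment.
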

\begin{proof} (i) As $L$ is abelian, the identity map is a non-singular derivation of $L$, so the claim follows from Lemma \ref{nonsing}.

(ii) Apply Lemma \ref{nonsing} by considering a linear transformation $D$ of $L$ inducing the identity on $L/L^2$ and such that $D(z)=2z$ for every $z\in L^2$.  

(iii) By hypothesis, we have a Levi Decomposition $L=S\dot{+} A$, where $S$ is a semisimple algebra and $A$ is an abelian ideal of $L$. Consider the linear transformation of $L$ vanishing on $S$ and inducing the identity map on $A$.  Since $L^2=S$, we see that $D$ satisfies the hypothesis of Lemma \ref{nonsing}, yielding the assertion.
\end{proof}

In 1955 Jacobson proved in (see \cite[Theorem 3]{jacobson}) that, in characteristic zero, a Lie algebra admitting a non-singular derivation is nilpotent. In the same paper he asked if, conversely, every nilpotent Lie algebra necessarily has a non-singular derivation. Eventually, this was answered in the negative by Dixmier and Lister in the celebrated paper \cite{DL} and their counterexample lead to the notion of characteristically nilpotent Lie algebras.  We recall the definition. For a Lie algebra $L$, we set
$$
L^{[1]} = \Der (L) (L) = \{D(x) \vert \, D\in \Der(L), \, x\in L \}
$$
 and 
$$
L^{[k]} = \Der (L) (L^{[k-1]}), \qquad k>1.
$$
The Lie algebra $L$ is said to be \emph{characteristically nilpotent} if there exists a positive integer $m$ such that  $L^{[m]}=0$. By the Engel-Jacobson Theorem, it is easy to see that this is equivalent to requiring  that all derivations of $L$ are nilpotent. Obviously, every characteristically nilpotent Lie algebra is nilpotent. Moreover,  in \cite[Theorem 1]{LT} Leger and  T\^og\^o proved  that $L$ is characteristically nilpotent if and only if $\Der(L)$ is nilpotent and $\dim L >1$.

The counterexample provided by Dixmier and Lister in \cite{DL} also answered in the negative the question of whether every nilpotent Lie algebra is the derived algebra of some  Lie algebra.  The same authors then asked whether a characteristically nilpotent Lie algebra can ever be a  derived algebra. In \cite{LT} Leger and T\^og\^o proved the following result, which shows that this is not the case under certain conditions:

\begin{prop}
Let $L$ be a characteristically nilpotent Lie algebra. If $\Der (L)$ annihilates $Z(L)$ then $L$ is not a derived algebra.
\end{prop}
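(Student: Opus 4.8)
The plan is to assume $L=H^2$ for some Lie algebra $H$ and to extract a contradiction purely from Theorem \ref{necessary}(iii) together with the hypothesis on the centre, without needing any further structure theory for $H$. First I would record the two inputs. Since $L$ is a derived algebra, Theorem \ref{necessary}(iii) gives $\ad(L)\subseteq \Der(L)^2$. Since $L$ is characteristically nilpotent, there is a least integer $m$ with $L^{[m]}=0$, so that $L^{[m-1]}\neq 0$. I would also note that $L$ must be non-abelian (an abelian algebra has the identity map as a non-nilpotent derivation), whence $L^2\neq 0$; as $L^{[1]}=\Der(L)(L)\supseteq \ad(L)(L)=L^2$, this forces $m\geq 2$, so that $L^{[m-2]}$ is defined, under the convention $L^{[0]}=L$.

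The key step is to show that $L^{[m-2]}$ is central. For this I would use only the definition of the series, namely $\Der(L)(L^{[m-2]})=L^{[m-1]}$ and $\Der(L)(L^{[m-1]})=L^{[m]}=0$. Hence, for any $A,B\in\Der(L)$, both composites $AB$ and $BA$ send $L^{[m-2]}$ into $L^{[m-1]}$ and then into $L^{[m]}=0$, so the commutator $[A,B]$ annihilates $L^{[m-2]}$. Therefore the whole of $\Der(L)^2$ annihilates $L^{[m-2]}$, and by Theorem \ref{necessary}(iii) so does $\ad(L)$; that is, $[L,L^{[m-2]}]=0$, which means $L^{[m-2]}\subseteq Z(L)$.

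The hypothesis then closes the argument at once: since $\Der(L)$ annihilates $Z(L)$ and $L^{[m-2]}\subseteq Z(L)$, we obtain $L^{[m-1]}=\Der(L)(L^{[m-2]})=0$, contradicting $L^{[m-1]}\neq 0$. This contradiction shows that no such $H$ exists, so $L$ is not a derived algebra.

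The step I expect to be the crux is recognising that $L^{[m-2]}$ lands inside the centre. The inclusion $\ad(L)\subseteq\Der(L)^2$ is exactly what is needed to trade a single inner derivation for a commutator of two derivations, which pushes $L^{[m-2]}$ two steps down the characteristic series and hence to $0$; this is what promotes $L^{[m-2]}$ from lying merely in $L$ to lying in $Z(L)$. The only mildly delicate point is the indexing near the bottom of the series, i.e. checking $m\geq 2$ and handling the boundary case $m=2$ with the convention $L^{[0]}=L$. Once centrality of $L^{[m-2]}$ is established, the centre hypothesis supplies precisely the one extra application of $\Der(L)$ that collapses $L^{[m-1]}$ to zero, so no appeal to Levi decomposition, Engel's theorem, or nilpotency of $H$ is required.
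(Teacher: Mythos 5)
Your proof is correct, and it is necessarily a different route from the paper's, because the paper contains no proof at all: the proposition is quoted from Leger and T\^og\^o \cite{LT} as a known result. Your argument stays entirely inside $L$ and $\Der(L)$: since any commutator $[A,B]$ of derivations pushes $L^{[m-2]}$ two steps down the characteristic series and hence to zero, the paper's Theorem \ref{necessary}(iii) ($\ad(L)\subseteq\Der(L)^2$) forces $L^{[m-2]}\subseteq Z(L)$, and the hypothesis $\Der(L)(Z(L))=0$ then gives $L^{[m-1]}=0$, contradicting the minimality of $m$; your boundary checks ($L$ non-abelian, hence $m\geq 2$, with the convention $L^{[0]}=L$) are sound. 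By contrast, the classical argument of \cite{LT} works with the ambient algebra $H$: each $\ad_L h$, $h\in H$, is nilpotent because $L$ is characteristically nilpotent, and $\ad h$ is trivially nilpotent modulo $L=H^2$, so Engel's theorem makes $H$ nilpotent, after which the last nonzero terms of the lower central series of $H$, combined with the centre hypothesis, yield the contradiction. Your route buys brevity and complete independence from the structure of $H$ (no Engel, no nilpotency of $H$), at the perfectly legitimate cost of invoking Theorem \ref{necessary}(iii), which is proved earlier in the paper and does not depend on this proposition. Two harmless remarks: the paper defines $L^{[k]}$ as a set of images rather than a subspace, but every step of your argument is element-wise, so nothing is affected; and the degenerate case $L=0$ (vacuously characteristically nilpotent, yet a derived algebra) must be excluded by convention, consistently with the paper's remark that characteristic nilpotency entails $\dim L>1$.
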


\begin{theor} Let $L$ be a characteristically nilpotent Lie algebra. Let $n$ denote the nilpotence class of $L$ and $m$ be the smallest integers such that $L^{[m]}=0$. If $2(m - 1) > n + 1$, then $L$ is not a derived algebra.
\end{theor}

In \cite{L}, Luks eventually solved the problem in the positive by constructing a 18-dimensional Lie algebra whose derived subalgebra is characteristically nilpotent. 

We recall that a Lie algebra of dimension $n$ is said to be \emph{filiform} if it is nilpotent of class $n-1$. In \cite{CN} the following is proved:

\begin{prop} Let $L$ be a filiform Lie algebra. Then $L$ is a derived algebra if and only if it is not characteristically nilpotent.
\end{prop}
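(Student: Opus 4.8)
The plan is to prove the two implications separately.

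\emph{If $L$ is a derived algebra, then $L$ is not characteristically nilpotent.} I would argue the contrapositive and show that a characteristically nilpotent filiform algebra is never a derived algebra, by invoking the result of Leger and T\^og\^o quoted above (a characteristically nilpotent $L$ with $\Der(L)$ annihilating $Z(L)$ is not a derived algebra). The point is that a filiform algebra of dimension $n\geq 3$ has one-dimensional centre, namely $Z(L)=L^{n-1}$, since it has nilpotency class $n-1$ and $L^{n-1}$ is central. If $L$ is characteristically nilpotent then, by the Engel--Jacobson characterisation recalled in the text, every derivation of $L$ is a nilpotent operator; as $Z(L)$ is a one-dimensional characteristic ideal, each derivation preserves it and acts nilpotently, hence trivially, on it. Thus $\Der(L)$ annihilates $Z(L)$ and the quoted result gives that $L$ is not a derived algebra. (For $n\leq 2$ the algebra is abelian, hence a derived algebra by Proposition \ref{suff}(i) and not characteristically nilpotent, so the statement holds trivially.)

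\emph{If $L$ is not characteristically nilpotent, then $L$ is a derived algebra.} By Lemma \ref{nonsing} it suffices to produce a derivation inducing a non-singular map on $L/L^2$. I would first record the standard fact that, for a nilpotent Lie algebra, a derivation $D$ is a nilpotent operator if and only if the induced map $\bar D$ on $L/L^2$ is nilpotent; indeed, since $L$ is generated by any complement of $L^2$, every eigenvalue of $D$ over the algebraic closure is a non-negative integer combination of the eigenvalues of $\bar D$. Combined with the Engel--Jacobson characterisation, the hypothesis that $L$ is not characteristically nilpotent yields a derivation $D$ whose induced map $\bar D$ on the two-dimensional space $L/L^2$ is \emph{not} nilpotent, i.e. has a nonzero eigenvalue. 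If $\bar D$ is actually non-singular we are done at once. The remaining, and genuinely delicate, case is when $\bar D$ is non-nilpotent but singular, so that it has eigenvalues $\lambda\neq 0$ and $0$.

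To attack this case I would exploit the rigidity of the filtration. Since each layer $L^i/L^{i+1}$ $(i\geq 2)$ of a filiform algebra is one-dimensional, every derivation acts on it as a scalar $\mu_i$; the bracket induces an isomorphism $\Lambda^2(L/L^2)\cong L^2/L^3$, and equivariance of $D$ shows $\mu_2=\mathrm{tr}(\bar D)=\lambda\neq 0$. More generally, writing $c_i\colon L/L^2\to F$ for the nonzero functional recording how $L/L^2$ shifts $L^i/L^{i+1}$ into $L^{i+1}/L^{i+2}$, the derivation condition forces $c_i\circ\bar D=(\mu_{i+1}-\mu_i)c_i$, so that each $c_i$ is an eigen-covector of $\bar D$ and the increments $\mu_{i+1}-\mu_i$ lie in $\{\lambda,0\}$. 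The plan is then to fix an adapted basis $e_1,\dots,e_n$ with $[e_1,e_i]=e_{i+1}$ for $2\leq i\leq n-1$ and to seek a derivation $E$ whose induced map on $L/L^2$ is the identity (hence invertible): to leading order one takes the degree operator $E(e_1)=e_1$, $E(e_i)=(i-1)e_i$ $(i\geq 2)$, and then corrects it by lower-filtration terms so as to absorb the inhomogeneous structure constants $a_{ij}^{k}$ with $k>i+j-1$. Lemma \ref{nonsing} would then finish the argument (and the invertibility of the induced map, being a statement over $F$, may be verified after extending scalars, since $\Der(L)\otimes\bar F=\Der(L\otimes\bar F)$).

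The main obstacle is precisely this last construction. The naive degree operator $e_1\mapsto e_1$, $e_i\mapsto (i-1)e_i$ is a genuine derivation only for the split model algebra $F_n$; in general it fails exactly on the inhomogeneous brackets, and finding the corrections is equivalent to trivialising a suitable class in $H^2(L;L)$. The crux is therefore to show that, for a filiform algebra admitting \emph{some} non-nilpotent derivation, an invertible map on $L/L^2$ can always be realised, possibly by a non-diagonal choice of $\bar E$ when the degree operator itself is obstructed. This is where the finer structure theory of filiform algebras---the constraints among the $a_{ij}^{k}$ forced by the Jacobi identity, together with the eigen-covector relations above---must be brought to bear, and it is the step I expect to demand the most care.
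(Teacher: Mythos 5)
First, a point of comparison: the paper does not prove this proposition at all --- it is quoted from \cite{CN} --- so there is no internal argument to measure yours against; your proof has to stand entirely on its own. Its first half does. For $n\geq 3$ a filiform algebra has one-dimensional centre $Z(L)=L^{n-1}$ (note that nilpotency class $n-1$ only gives the inclusion $L^{n-1}\subseteq Z(L)$; that the centre is not larger needs a further line, e.g.\ induction on $L/L^{n-1}$ together with an adapted generator, but this is standard and true), the centre is a characteristic ideal, and by the Engel--Jacobson characterisation every derivation of a characteristically nilpotent algebra is nilpotent, hence acts as zero on a one-dimensional invariant subspace. So $\Der(L)$ annihilates $Z(L)$ and the Leger--T\^og\^o criterion quoted in Section 2 (from \cite{LT}) shows $L$ is not a derived algebra; the low-dimensional abelian cases are also handled correctly. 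This direction is complete.

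The converse direction, however, contains a genuine gap --- one you flag yourself. The reduction is fine: a derivation $D$ of a nilpotent algebra is nilpotent exactly when the induced map $\bar D$ on $L/L^2$ is nilpotent, so non-characteristic nilpotency gives $\bar D$ with a nonzero eigenvalue, and Lemma \ref{nonsing} finishes the proof unless $\bar D$ has eigenvalues $\lambda\neq 0$ and $0$; your increment relations $\mu_{i+1}-\mu_i\in\{\lambda,0\}$ are also correct. But for this remaining case you offer only a plan (correct the degree operator by lower-filtration terms, equivalently trivialise a class in $H^2(L;L)$) and explicitly concede that carrying it out is the crux. That unproved step is the entire content of the proposition, and the case it must handle is not vacuous: for instance, the six-dimensional filiform algebra with nonzero brackets $[x,y]=e_3$, $[x,e_3]=e_4$, $[x,e_4]=e_5$, $[y,e_5]=e_6$, $[e_3,e_4]=-e_6$ admits the non-nilpotent derivation $\mathrm{diag}(1,0,1,2,3,3)$ (in the ordered basis $x,y,e_3,\ldots,e_6$), which is singular on $L/L^2$. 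To conclude, one must produce a \emph{different} derivation with non-singular action --- here $\mathrm{diag}(0,1,1,1,1,2)$ is also a derivation and the sum of the two is non-singular --- and proving that such a repair is always available for a non-characteristically-nilpotent filiform algebra is exactly the structure theory of tori of derivations of filiform algebras that you defer; it does not follow formally from the eigen-covector relations you derive. As written, your proposal establishes only the implication that a derived filiform algebra is not characteristically nilpotent.
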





\section{Centreless Lie algebras}

If $L$ is a Lie algebra with $Z(L)=0$, then $L\cong \ad(L)$, $\ad(L)$ is an ideal of $\Der(L)$ and $C_{\Der(L)}(\ad(L))=0$, by \cite[Lemma 10]{schenk}. It follows from this that $Z(\Der(L))=0$, so we can form a tower of derivation algebras of $L$ by putting $D_0=L$, $D_i=\Der(D_{i-1})$ for $i\geq 1$. Then Schenkman proved that the sequence $D_0, D_1, \ldots$ ends after finitely many steps, and the final term in the sequence is complete. We shall show here that the same is true for Lie algebras $L$ that are the derived algebra of a reduced algebra.
\par

If $L$ is a Lie algebra with $Z(L)=0$, we say that $L$ is the derived algebra of a  \emph{reduced} algebra $H$ if $C_H(L)=0$.

\begin{lemma}\label{1} If $Z(L)=0$ and $L$ is the derived algebra of $H$, then
\begin{itemize}
\item[(i)] $C_H(L)=Z(H)$; and
\item[(ii)] $L$ is the derived algebra of the reduced algebra $H/C_H(L)$.
\end{itemize}
\end{lemma}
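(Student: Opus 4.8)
The plan is to establish part (i) first, since part (ii) follows readily from it together with the observation that the hypothesis $Z(L)=0$ forces $L\cap C_H(L)=0$. Throughout I write $L=H^2$.

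For part (i), one inclusion is immediate: since $L=H^2\subseteq H$, any element centralising all of $H$ certainly centralises $L$, whence $Z(H)\subseteq C_H(L)$. The substance of the argument lies in the reverse inclusion $C_H(L)\subseteq Z(H)$. I would take an arbitrary $x\in C_H(L)$ and $h\in H$ and aim to show $[x,h]=0$. The key point is that $[x,h]\in H^2=L$, so it suffices to prove $[x,h]\in Z(L)$, which is zero by hypothesis. To this end I would compute, for any $l\in L$, the bracket $[[x,h],l]$ via the Jacobi identity as $[x,[h,l]]-[h,[x,l]]$. Since $[h,l]\in[H,L]\subseteq L$ and $x$ centralises $L$, the first term vanishes; and since $[x,l]=0$, so does the second. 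Hence $[[x,h],l]=0$ for all $l\in L$, so $[x,h]\in Z(L)=0$. As $h$ was arbitrary, $x\in Z(H)$, giving the claim.

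For part (ii), I would set $\bar H=H/C_H(L)$ and $\bar L=(L+C_H(L))/C_H(L)$. First note that $L\cap C_H(L)=C_L(L)=Z(L)=0$, so $L$ maps isomorphically onto $\bar L$. A direct computation then gives
$$
\bar H^2=([H,H]+C_H(L))/C_H(L)=(L+C_H(L))/C_H(L)=\bar L\cong L,
$$
so $L$ is realised as the derived algebra of $\bar H$. It remains to check that $\bar H$ is reduced, i.e. that $C_{\bar H}(\bar L)=0$. This is exactly where part (i) is used: if $\bar x=x+C_H(L)$ centralises $\bar L$, then $[x,L]\subseteq C_H(L)=Z(H)$; but also $[x,L]\subseteq H^2=L$, so $[x,L]\subseteq L\cap C_H(L)=Z(L)=0$. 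Thus $x\in C_H(L)$ and $\bar x=0$, as required.

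The main obstacle is the reverse inclusion in part (i): passing from ``$x$ centralises the derived algebra $L$'' to ``$x$ centralises all of $H$''. The Jacobi-identity computation showing $[x,h]\in Z(L)$ is the crux, and it is precisely here that the hypothesis $Z(L)=0$ is brought to bear; without it the conclusion fails. Once this inclusion is in hand, everything else is formal manipulation of quotients, with the identity $L\cap C_H(L)=Z(L)=0$ doing the remaining work in part (ii).
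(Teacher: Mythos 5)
Your proof is correct and follows essentially the same route as the paper's: part (i) rests on the same Jacobi-identity computation, using that $[x,h]\in H^2=L$ together with $C_H(L)\cap L=Z(L)=0$, and part (ii) is the same quotient computation $\bar H^2=(L+C_H(L))/C_H(L)\cong L/Z(L)=L$. The only cosmetic difference is in verifying reducedness: the paper notes that $H/C_H(L)$ is the image of the faithful representation $h\mapsto \ad_L h$, whereas you check $C_{\bar H}(\bar L)=0$ by a direct bracket computation (which, incidentally, only needs $C_H(L)\cap L=Z(L)=0$, not the full strength of part (i)).
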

\begin{proof} (i)  Clearly $Z(H)\subseteq C_H(L)$. Let $h\in C_H(L)$, so $[h,x]=0$ for all $x\in L$. For every $k\in H$ we have $[k,x]\in L$ as $L$ is an ideal of $H$. Hence $$0=[h,[k,x]]=-[k,[x,h]]-[x,[h,k]]=-[x,[h,k]].$$ 
But $[h,k]\in H^2=L$ so $[h,k]\in C_H(L)\cap L=Z(L)=0$, which gives $h\in Z(H)$. Hence $C_H(L)\subseteq Z(H)$. 
\par

\noindent (ii) We have
\begin{align*}
\left(\frac{H}{C_H(L)}\right)^2&= \frac{H^2+C_H(L)}{C_H(L)}=\frac{L+C_H(L)}{C_H(L)}\cong \frac{L}{L\cap C_H(L)} \\
& =\frac{L}{Z(L)}=L,
\end{align*} so $H/C_H(L)$ has $L$ as its derived algebra. But $\theta : H \rightarrow \Der(L),  h\mapsto \ad_L h$, is a homomorphism with kernel $C_H(L)$, so $H/C_H(L)$ acts faithfully on $L$, and so it is reduced.
\end{proof}

Note that, if $Z(L)=0$ and $L$ as the derived algebra of the reduced algebra $H$, then $Z(H)=0$, so the process can be continued.

\begin{prop} Let $L$ be a finite-dimensional Lie algebra with $Z(L)=0$. Suppose that
\[
 L = L_0\subset  L_1\subset L_2 \subset \ldots,
\] where $L_n$ is the derived algebra of the reduced algebra $L_{n+1}$ for all $n\geq 1$. Then this sequence terminates after finitely many steps.
\end{prop}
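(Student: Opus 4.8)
The plan is to show that the dimensions of the terms $L_n$ are bounded above by a single constant depending only on $L$, namely $\dim \Der(L)$; once this is established, the strictly ascending chain $L_0\subset L_1\subset L_2\subset\cdots$ of subspaces of bounded dimension must terminate after finitely many steps. The two facts that drive the argument are immediate from the hypothesis: since $L_n$ is the derived algebra of the reduced algebra $L_{n+1}$, we have $L_n=L_{n+1}^2$ (so that $L_n$ is a \emph{characteristic} ideal of $L_{n+1}$) and $C_{L_{n+1}}(L_n)=0$.

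First I would produce the embedding that controls $\dim L_{n+1}$. Exactly as in the proof of Theorem \ref{necessary}(iii), the map $\varphi_n\colon L_{n+1}\to\Der(L_n)$, $h\mapsto\ad_{L_n}h$, is a Lie algebra homomorphism whose kernel is $C_{L_{n+1}}(L_n)=0$. Hence $\varphi_n$ is injective and
\[
\dim L_{n+1}\ \le\ \dim\Der(L_n).
\]
Taken on its own this is not enough, since a priori $\Der(L_n)$ could be much larger than $L_n$ and its dimension could grow unboundedly as $n$ increases; controlling $\dim\Der(L_n)$ is the real issue.

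The key second step is to bound the derivation algebras themselves. Because $L_n=L_{n+1}^2$ is characteristic in $L_{n+1}$, every derivation of $L_{n+1}$ preserves $L_n$, so restriction defines a homomorphism $\rho_n\colon\Der(L_{n+1})\to\Der(L_n)$. I would check that $\rho_n$ is injective: if $D\in\ker\rho_n$, then $D$ vanishes on $L_n$, and for $h\in L_{n+1}$ and $x\in L_n$ one has $[D(h),x]=D([h,x])-[h,D(x)]=0$ (using that $[h,x]\in L_n$), so $D(h)\in C_{L_{n+1}}(L_n)=0$; thus $D=0$. Consequently $\dim\Der(L_{n+1})\le\dim\Der(L_n)$, i.e.\ the sequence $\dim\Der(L_n)$ is non-increasing. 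Iterating yields $\dim\Der(L_n)\le\dim\Der(L_0)=\dim\Der(L)$ for every $n$, and combining this with the first step gives the uniform bound $\dim L_{n+1}\le\dim\Der(L_n)\le\dim\Der(L)$. Every term of the chain therefore has dimension at most $\dim\Der(L)$, and the strictly increasing chain stabilises.

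The main obstacle, and the crux of the whole argument, is the second step: realising that the injectivity of the restriction map forces $\dim\Der(L_n)$ to be monotonically non-increasing along the tower. This is what converts the (insufficient) pointwise embedding $L_{n+1}\hookrightarrow\Der(L_n)$ into a genuine uniform dimension ceiling, and it is precisely where the reducedness hypothesis $C_{L_{n+1}}(L_n)=0$ is used twice. This gives a direct dimension-theoretic analogue of Schenkman's terminating derivation tower, with no appeal to completeness needed.
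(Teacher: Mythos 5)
Your proof is correct, but it follows a genuinely different route from the paper's. The paper fixes the bottom term and proves by induction, via the Jacobi identity together with Lemma \ref{1}, that $C_{L_n}(L_0)=0$ for every $n$; hence each $L_n$ embeds via $h\mapsto\ad_{L_0}h$ into the \emph{single fixed} algebra $\Der(L_0)$, which gives the bound $\dim L_n\le\dim\Der(L_0)$ directly. You instead work only with adjacent levels: reducedness makes $L_{n+1}\hookrightarrow\Der(L_n)$ injective, and your key lemma --- a derivation $D$ of $L_{n+1}$ vanishing on $L_n=L_{n+1}^2$ satisfies $[D(h),x]=D([h,x])-[h,D(x)]=0$ for all $x\in L_n$, whence $D(h)\in C_{L_{n+1}}(L_n)=0$ --- makes each restriction map $\Der(L_{n+1})\to\Der(L_n)$ injective, so that $\dim\Der(L_n)$ is non-increasing and the same ceiling $\dim\Der(L_0)$ falls out by telescoping. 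The two arguments are close cousins: composing your injections $L_{n+1}\hookrightarrow\Der(L_n)\hookrightarrow\cdots\hookrightarrow\Der(L_0)$ yields exactly the map $h\mapsto\ad_{L_0}h$, so your proof implicitly recovers the paper's claim that $C_{L_n}(L_0)=0$. What yours buys is a sharper structural byproduct (monotonicity of the full derivation algebras along the tower, not merely faithfulness of the inner action on $L_0$), and it bypasses Lemma \ref{1} and any discussion of centres; the paper's induction is shorter and runs parallel to Schenkman's classical derivation-tower argument, which is the explicit model for that section.
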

\begin{proof} Note that $L_0$ is an ideal of $L_n$ for all $n\geq 1$, since it is the $n$th term of the derived series for $L_n$.
\par

We claim that $C_{L_n}(L_0)=0$ for all $n\geq 1$. We prove this by induction on $n$. It clearly holds for $n=1$, so assume the result holds for $n=k$ and let $x\in C_{L_{k+1}}(L_0)$. 
\par

For every $y\in L_{k+1}$ we have $[[x,y],L_0]=-[[y,L_0],x]-[[L_0,x],y]=0$ and $[x,y]\in L_k$, so $[x,y]\in C_{L_k}(L_0)$. It follows that $[x,y]=0$, by the inductive hypothesis, whence $x\in Z(L_{k+1})$. Therefore we have $x=0$, by Lemma \ref{1} and the construction.
\par

But now we have that $L_n$ is embedded in $\Der(L_0)$ for all $n\geq 0$, so $\dim (L_n)$ is bounded by $\dim (\Der(L_0))$ and the sequence terminates.
\end{proof}

\section{Lie algebras with composition lengths 2, 3 and 4}
This section is devoted to the classification of Lie algebras over fields of characteristic zero with composition length $\leq 4$ that are derived algebras. 
\begin{lemma}\label{comp} Let $L$ be a Lie algebra in which the top factor of every composition series is simple. Then $L$ is perfect and hence is its own derived algebra.
\end{lemma}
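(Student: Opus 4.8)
The plan is to recast the hypothesis about composition series as a statement about maximal ideals, and then to derive a contradiction from the assumption that $L$ fails to be perfect. First I would observe that the top factor of a composition series $0=L_0\subset L_1\subset\cdots\subset L_n=L$ is exactly $L/L_{n-1}$, where $L_{n-1}$ is by definition a maximal ideal of $L$. Conversely, every maximal ideal $M$ of $L$ occurs as the penultimate term of some composition series: since $L$ is finite-dimensional, $M$ itself possesses a composition series, and appending $L$ on top produces a composition series of $L$ whose top factor is $L/M$. Hence the hypothesis is equivalent to demanding that $L/M$ be a \emph{simple} Lie algebra for every maximal ideal $M$ of $L$. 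The essential subtlety to keep in mind here is that, for a maximal ideal $M$, the quotient $L/M$ automatically has no proper nonzero ideals, so the content of the hypothesis lies entirely in the requirement that $L/M$ be \emph{non-abelian}.

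The key step is then the following contradiction argument. Suppose $L$ is not perfect, so that $L^2\subsetneq L$ and the quotient $L/L^2$ is a nonzero abelian Lie algebra. In an abelian algebra every subspace is an ideal, so $L/L^2$ contains a codimension-one ideal, that is, a hyperplane. Pulling this hyperplane back to $L$ yields a maximal ideal $M$ with $L^2\subseteq M$ and $\dim(L/M)=1$.

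Such a quotient $L/M$ is one-dimensional, hence abelian, and therefore not simple. This contradicts the reformulated hypothesis, forcing $L^2=L$; that is, $L$ is perfect. Finally, a perfect algebra is trivially its own derived algebra, since $L=L^2=[L,L]$, which completes the argument.

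The main obstacle is really just the bookkeeping in the first paragraph: one must verify carefully that every maximal ideal genuinely arises as the top step of some composition series, so that the blanket hypothesis over all composition series translates cleanly into a condition on all maximal ideals. Once that translation is secured, the remainder is the elementary observation that a proper derived algebra always produces a one-dimensional, and hence non-simple, abelian top factor.
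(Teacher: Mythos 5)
Your proof is correct and follows essentially the same route as the paper's: both argue by contraposition that if $L$ is not perfect, then a codimension-one ideal containing $L^2$ can be placed atop a composition series, producing a one-dimensional (hence non-simple) top factor. Your maximal-ideal reformulation and the verification that any maximal ideal heads some composition series simply make explicit the details the paper leaves implicit.
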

\begin{proof} Suppose that $L$ is not perfect. Then it has a composition series in which the top factor is one-dimensional and so not simple.
\end{proof}
\medskip

For the rest of this section we shall assume that the ground field has characteristic zero.

\begin{theor}\label{comp2} Let $L$ be a Lie algebra, and suppose that it has composition series of length 2. Then, either $L$ is a derived algebra or $L$ is the non-abelian Lie algebra of dimension 2. 
\end{theor}
\begin{proof} Suppose that $0 \subset B\subset L$ is a composition series for $L$ and that $L$ is not a derived algebra. Then, we can assume that $\dim (L/B)=1$, by Lemma \ref{comp}. Suppose that $B$ is simple. Clearly, $L$ cannot be semisimple and its radical $R$ must be one-dimensional. Hence $L=B\oplus Fx$ for some $x\in L$ and is a derived algebra by Lemma \ref{dsum}(i), a contradiction. It follows that $\dim L=2$. Now, if $L$ were abelian then it would be the derived algebra of an almost abelian Lie algebra, a contradiction. Thus $L$ is the 2-dimensional non-abelian Lie algebra, which is not a derived algebra by Corollary \ref{almabel}.
\end{proof}
\medskip

Note that the only three-dimensional Lie algebras that are derived algebras are abelian, the Heisenberg algebra and the perfect algebras; the others are solvable, but not nilpotent, and there are infinitely many of these, even over an algebraically closed field (see \cite[Chapter 1, Section 4]{jac}).

\begin{lemma}\label{cod1} Let $L$ have an ideal $B=\Rad(B)\oplus S$ of codimension one in $L$, where $S$ is a semisimple ideal of $B$. Then $L=\Rad(L)\oplus S$.
\end{lemma}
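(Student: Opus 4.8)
The plan is to reduce everything to showing that the semisimple part $S$ is in fact an ideal of the whole of $L$, after which the decomposition $L=\Rad(L)\oplus S$ falls out of complete reducibility together with elementary radical manipulations. Fix $x\in L\setminus B$, so that $L=B\dot{+}Fx$ and, since $B$ is an ideal of $L$, the map $\ad_L x$ restricts to a derivation $D:=\ad_B x$ of $B$.

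First I would observe that $S$ is a characteristic ideal of $B$. Writing $R=\Rad(B)$, so that $B=R\oplus S$ with $[R,S]=0$, I would show that every derivation of $B$ maps $S$ into $S$. Decomposing $D(s)=\alpha(s)+\sigma(s)$ with $\alpha(s)\in R$ and $\sigma(s)\in S$, the Leibniz rule applied to a bracket $[s,s']\in S$ kills the $R$-components (because $[R,S]=0$), so the $R$-component of $D([s,s'])$ vanishes; since $S$ is semisimple we have $S=[S,S]$, and hence $\alpha$ vanishes on all of $S$, giving $D(S)\subseteq S$. Applying this to $D=\ad_B x$ yields $[x,S]\subseteq S$, and together with $[B,S]\subseteq S$ this gives $[L,S]\subseteq S$, i.e. $S$ is an ideal of $L$. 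This step is where the hypotheses do the real work, and I expect it to be the main obstacle.

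With $S$ an ideal of $L$, a direct computation gives $[S,L]=[S,R]+[S,S]+[S,x]=S$, using $[R,S]=0$, $[S,S]=S$, and $[x,S]\subseteq S$. Since $S$ is semisimple and $F$ has characteristic zero, Weyl's theorem applies to the adjoint action of $S$ on $L$, yielding a decomposition into $S$-submodules $L=[S,L]\oplus C_L(S)=S\oplus C_L(S)$, in which both summands are ideals of $L$ (the centraliser of an ideal is an ideal, and $[S,L]$ is an ideal). It then remains to identify $C_L(S)$ with $\Rad(L)$. For one inclusion I would note that $B/S\cong R$ is solvable and has codimension one in $L/S$, so $L/S$ is solvable; since $L/S\cong C_L(S)$ by the direct decomposition, the ideal $C_L(S)$ is solvable and hence $C_L(S)\subseteq\Rad(L)$. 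For the reverse inclusion, $[\Rad(L),S]$ is contained in both $\Rad(L)$ and $S$, hence in $\Rad(L)\cap S=0$ (a solvable ideal of the semisimple algebra $S$), so $\Rad(L)\subseteq C_L(S)$. Therefore $C_L(S)=\Rad(L)$ and $L=\Rad(L)\oplus S$, as required.
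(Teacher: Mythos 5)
Your proof is correct, but it takes a genuinely different route from the paper's. The paper's argument is a two-line computation: it first asserts the vector-space decomposition $L=\Rad(L)\dot{+}S$ (this ``clearly'' conceals a standard argument: $\Rad(B)\subseteq \Rad(L)$, $S\cap\Rad(L)=0$, and a semisimple algebra has no codimension-one ideals, so the image of $B$ in $L/\Rad(L)$ must be everything), and then kills the bracket directly via $[\Rad(L),S]\subseteq \Rad(L)\cap B\subseteq \Rad(B)$ together with the Jacobi identity and perfectness of $S$: $[\Rad(L),S]=[\Rad(L),S^2]\subseteq[[\Rad(L),S],S]\subseteq[\Rad(B),S]=0$. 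You instead first prove that $S$ is a characteristic ideal of $B$ (so that $\ad_B x$ preserves $S$, making $S$ an ideal of all of $L$), then invoke Weyl's theorem to split $L=[S,L]\oplus C_L(S)=S\oplus C_L(S)$, and finally identify $C_L(S)=\Rad(L)$ by elementary radical arguments. Both proofs hinge on $S=[S,S]$ plus $[\Rad(B),S]=0$; indeed your characteristic-ideal computation is essentially a cousin of the paper's Jacobi trick. What your version buys is structural clarity: it makes explicit that $S$ is an ideal of $L$, which the paper only obtains implicitly at the end. What it costs is machinery: Weyl's theorem is heavier than needed, since once $S\trianglelefteq L$ is known, the splitting follows without representation theory (the image of $S$ in the semisimple quotient $L/\Rad(L)$ has solvable, hence zero, cokernel, so $L=S+\Rad(L)$, and then $[\Rad(L),S]\subseteq \Rad(L)\cap S=0$). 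Both arguments correctly operate under the section's standing characteristic-zero hypothesis, which is where $S=[S,S]$, the Levi-type facts, and (in your case) Weyl's theorem are available.
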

\begin{proof} Clearly, $L=\Rad(L)\dot{+} S$. Then $[\Rad(L),S]\subseteq \Rad(L)\cap B\subseteq \Rad(B)$, whence $$[\Rad(L),S]=[\Rad(L),S^2]\subseteq [[\Rad(L),S],S]\subseteq [\Rad(B),S]=0.$$
\end{proof}

\begin{theor}\label{comp3} Let $L$ be a Lie algebra, and suppose that it has composition series of length 3. Then $L$ is a derived algebra if and only if it is one of the following.
\begin{itemize}
\item[(i)] $L$ is perfect;
\item[(ii)] $L$ is three-dimensional abelian or the three-dimensional Heisenberg algebra $H_1$;
\item[(iii)] $L=S\oplus R$, where $S$ is semisimple with two simple components and $R$ is one-dimensional;
\item[(iv)] $L=S\oplus R$, where $S$ is simple and $R$ is the two-dimensional abelian Lie algebra.
\end{itemize}
\end{theor}
\begin{proof} Suppose that $0\subset C\subset B\subset L$ is a composition series for $L$ and that $L$ is not a perfect algebra. Then we can assume that $\dim (L/B)=1$ by Lemma \ref{comp}. We also have that $L$ is not semisimple, so let $R=\Rad(L)$. If $B$ is semisimple, then by Lemma \ref{cod1} we have $L=B\oplus Fx$ where $Fx=R$, and $L$ is of the form given in (iii) and is a derived algebra by Corollary \ref{almabel}.
\par

Suppose then that $B$ is not semisimple and let $\Gamma=\Rad(B)$. Then $R\cap B\subseteq \Gamma$ and, moreover, either $\dim (B/C)=1$ or $\dim C=1$. Suppose that $L$ is not three-dimensional, so $\dim \Gamma = 1$. In the former case we have $B=C\oplus \Gamma$, where $C$ is simple and $\Gamma=Fy$ for some $y\in B$. Now $\dim (L/C)=2$, so $C$ is a Levi factor and $L=C\dot{+} R$ where $R=Fx+Fy$. Moreover, $L=C\oplus R$, by Lemma \ref{cod1}. If $R$ is abelian, then $L$ is a derived algebra, by Lemma \ref{dsum} (i); if $R$ is nonabelian, then $L$ is not a derived algebra by Lemma \ref{rad}, since its radical is not nilpotent. 
\par

Finally, suppose $\dim C=1$. Then $C=\Gamma$ and $B=S\dot{+}\Gamma$ where $S$ is simple and $\Gamma=Fy$. As above, we must have that $\dim R=2$ and $L=S\dot{+} R$ where $R=Fx+Fy$. 
Now, we have $[S,Fy]\subseteq Fy$ and $[x,y]\in B\cap R=Fy$, so $Fy$ is an ideal of $L$. As one-dimensional modules of a simple Lie algebra are trivial, 
it follows that $[S,\Gamma]=0$. We now have a composition series $0\subset S\subset B\subset L$ and we are in the case covered by the previous paragraph.
\end{proof}

\begin{theor} Let $L$ be a Lie algebra, and suppose that it has composition series of length 4. Then $L$ is a derived algebra if and only if one of the following occurs.
\begin{itemize}
\item[(i)] $L$ is a perfect algebra;
\item[(ii)] $L$ is four-dimensional abelian, or isomorphic to $H_1\oplus Fx$ where $H_1$ is the three-dimensional Heisenberg Lie algebra, or to the standard filiform Lie algebra $F_4$;
\item[(iii)] $L=S\dot{+} R$, where $S$ is semisimple with one/two/three components and $R$ is abelian of dimension three/two/one, respectively;
\item[(iv)] $L=S\oplus H_1$, where $S$ is simple and $H_1$ is the three-dimensional Heisenberg algebra.
\end{itemize}
\end{theor}
\begin{proof}  Let $0\subset D\subset C\subset B\subset L$ be a composition series for $L$ and suppose that $L$ is not a perfect algebra.  By Lemma \ref{comp}  we can assume $\dim (L/B)=1$. We consider several cases as follows.
\medskip

\noindent {\bf Case 1:} All of the composition factors have dimension one. Then $L$ is a four-dimensional nilpotent by Lemma \ref{rad}. It follows that $L$ is one of the Lie algebras described in (ii), which are derived algebras in view of Theorem \ref{suff},  Example \ref{ex1} and Example \ref{ex2}.
\par

\noindent {\bf Case 2:} There is one simple factor.
\par

{\bf Case 2(a):} Suppose that $D$ is simple. Then repeated use of Lemma \ref{cod1} yields that $L=\Rad(L)\oplus D$, where $\dim \Rad(L)=3$. Therefore, by Lemma \ref{rad}, $L$ will be a derived algebra if and only if $R=\Rad(L)$ is abelian or the three-dimensional Heisenberg algebra, and we have (iii) or (iv). 
\par

{\bf Case 2(b):} Suppose that $C/D$ is simple. Then, as in the last paragraph of Theorem \ref{comp3}, $C=\Gamma\oplus S$, where $\Gamma$ is the radical of $C$ and $S\cong C/D$. Repeated use of Lemma \ref{dsum} then yields that $L$ is derived if and only if it is as described in (iii) or (iv).
\par

{\bf Case 2(c):} Suppose that $B/C$ is simple. Then $B$ is not semisimple, as it has an ideal of dimension two, so $B=\Gamma \dot{+} S$, where $C=\Gamma$ and $S\cong B/C$. Hence $L=\Rad(L)\dot{+} S$, where $\Rad(L)$ is three-dimensional. To be a derived algebra, by Theorem \ref{rad} it is necessary for $R=\Rad(L)$ to be abelian or the three-dimensional Heisenberg abelian. If $R$ is abelian, this is also sufficient, by Proposition  \ref{suff}. Hence suppose that $R$ is the Heisenberg algebra with basis $x,y,z$ where $[x,y]=z$. Then $Fz=R^2$ is a one-dimensional ideal of $L$ and so $[S,z]=0$. 
Without loss of generality we may assume that $\Gamma=Fy+Fz$ and $[S,R]\subseteq B\cap R\subseteq \Gamma$. Let $s\in S$. Then $[s,x]=\alpha y+\beta z$, $[s,y]=\gamma y+\delta z$. Now,$$0=[s,[x,y]]=-[x,[y,s]]-[y,[s,x]]=\gamma z,$$ so $\gamma =0$ and $[S,\Gamma]\subseteq Fz$. But now $$[S,R]=[S^2,R]\subseteq [S,[S,R]]\subseteq[S,\Gamma]=[S^2,\Gamma]\subseteq [S,[S,\Gamma]]\subseteq [S,z]=0.$$ It follows that $L$ is as described in (iv) and this is sufficient for $L$ to be a derived algebra.
\par

\noindent {\bf Case 3:} There are two simple factors. In this case, $L$ is not semisimple, so $L=S\dot{+} \Rad(L)$ where $S$ is semisimple with two components and $R=\Rad(L)$ is two-dimensional. Hence $L$ is a derived algebra if and only if $R$ is abelian, by Corollary \ref{almabel} and  Proposition \ref{suff}, and so is as described in (iii).
\par

\noindent {\bf Case 4:} Suppose that $L$ has three simple factors. Then $L=S\oplus \Rad(L)$ where $S$ is semisimple with three simple components and $R=\Rad(L)$ is one-dimensional. All such algebras are derived algebras and are as described in (iii).

\end{proof}

\section{Answers to a couple of open questions}
The purpose of this section is to address two problems posed  in \cite{accm}. In Problem 10.20 the authors asked the following: ``Is it true that a Lie algebra is integrable if and only
if the corresponding Lie group is?". In that paper the authors use the term  ``integrable" to mean that the algebra (respectively, group) is a derived algebra (respectively, derived group).
\par

In \cite{green}, the following result is proved.

\begin{theor}\label{green} Let $G$ be a real Lie group with Lie algebra ${\mathfrak g}$ and let $H,K$ be connected Lie subgroups with Lie algebras ${\mathfrak h},{\mathfrak k}$. Then their commutator subgroup $[H,K]$ is a Lie subgroup. Its Lie algebra is the smallest subalgebra of  ${\mathfrak g}$ containing $[{\mathfrak h},{\mathfrak k}]$ that is invariant under $\ad\, {\mathfrak h}$ and $\ad\, {\mathfrak k}$.
\end{theor}

It follows that, if $G$ is a connected real Lie group, then the Lie algebra of $G'$ is ${\mathfrak g}'$. In \cite{green} it is also pointed out that $[H,K]$ is arc-wise connected, which implies the following lemma. 
 However, no proof is given, so we include a proof offered by Eric Wofsey.

\begin{lemma}\label {wofsey} If $G$ is a connected real Lie group, then so is $G'$.
\end{lemma}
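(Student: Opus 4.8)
The plan is to show that the commutator subgroup $G'=[G,G]$ of a connected real Lie group $G$ is connected, using the fact noted in \cite{green} that the commutator subgroup is arc-wise connected. The key observation is that $G'$ is generated as a subgroup by the set of commutators $S=\{[g,h]=ghg^{-1}h^{-1}\mid g,h\in G\}$, together with their inverses, so every element of $G'$ is a finite product of elements of $S$. The strategy is to prove that each individual commutator $[g,h]$ lies in the connected component of the identity of $G'$, and then to conclude that all of $G'$ lies in that component, since the component of the identity is itself a subgroup.

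First I would fix the identity component, but working inside $G'$ directly is awkward because $G'$ a priori carries no topology of its own; instead I would work inside the ambient group $G$, which is connected. Since $G$ is connected and locally path-connected (as a manifold), it is path-connected, so for any $g,h\in G$ I may choose continuous paths $\gamma,\delta:[0,1]\to G$ with $\gamma(0)=\delta(0)=e$, $\gamma(1)=g$, $\delta(1)=h$. Then
\[
t\mapsto \gamma(t)\,\delta(t)\,\gamma(t)^{-1}\,\delta(t)^{-1}
\]
is a continuous path in $G$ from $e$ to $[g,h]$ that lies entirely in $G'$, since at each time $t$ the value is a genuine commutator of elements of $G$. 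This exhibits every generator $[g,h]$ as being joined to $e$ by a path inside $G'$; the same path construction (or taking inverses) handles the inverses of commutators.

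The final step is to assemble these paths. Let $G_0'$ denote the path-component of $e$ in $G'$ (with the subspace topology inherited from $G$). By the previous paragraph every generator of $G'$ lies in $G_0'$. I would then check that $G_0'$ is a subgroup: if $a,b\in G_0'$ are joined to $e$ by paths $\alpha,\beta$ in $G'$, then $t\mapsto \alpha(t)\beta(t)^{-1}$ is a path in $G'$ joining $e$ to $ab^{-1}$, using that multiplication and inversion are continuous on $G$ and map $G'\times G'$ and $G'$ into $G'$. Hence $G_0'$ is a subgroup containing a generating set for $G'$, forcing $G_0'=G'$; thus $G'$ is path-connected, and in particular connected.

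The main obstacle to watch is the potential subtlety that $G'$ need not be a closed subset of $G$ and that the coincidence of its subspace topology with its intrinsic Lie group topology is not automatic. However, for the statement as phrased this is not needed: I only require connectedness, and the arc-wise connectedness asserted in \cite{green} (equivalently, the explicit path construction above) establishes path-connectedness of $G'$ as a subset of $G$, from which connectedness follows immediately. The one genuinely delicate point, that $G'$ is a Lie subgroup at all, is already furnished by Theorem \ref{green} applied with $H=K=G$, so I may take the Lie group structure on $G'$ as given and only need to supply the connectedness.
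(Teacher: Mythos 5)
Your proof is correct, and it runs on the same underlying engine as the paper's: continuity of the commutator word maps together with connectedness of $G$. But the packaging is genuinely different. The paper considers, for each $k$, the continuous map $f_k : G^{2k} \to G$, $(g_1,g_1',\ldots,g_k,g_k') \mapsto [g_1,g_1']\cdots[g_k,g_k']$; since $G^{2k}$ is connected, each image (the set of products of $k$ commutators) is connected, and $G'$ is the union of these sets, which all contain $e$, hence is connected. You instead join each single commutator to $e$ by an explicit path inside $G'$ (using that a connected manifold is path-connected) and then handle finite products by showing that the path-component of $e$ in $G'$ is a subgroup containing all generators. Your route requires slightly more input (local path-connectedness of $G$ and the subgroup verification), but it buys a strictly stronger conclusion: arc-wise connectedness of $G'$, which is precisely the hypothesis of Yamabe's theorem that both you and the paper invoke to know $G'$ is a Lie subgroup at all. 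In that sense your argument is more self-contained, effectively reproving the assertion from \cite{green} rather than only deducing connectedness from it, while the paper's union-of-connected-images argument is the more economical one if plain connectedness is all that is wanted. Your closing remark on the subspace versus intrinsic Lie topology is apt; the paper glosses over the same point at the same level of rigor.
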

\begin{proof} The fact that $G'$ is a Lie subgroup of $G$ follows from a theorem of Yamabe (\cite{yam}). Now, for each $k$, there is a continuous map 
\[
 f : G^{2k} \rightarrow G : (g_1,g_1',\ldots,g_k,g_k') \mapsto [g_1,g_1']\ldots [g_k,g_k'].
\] Thus, for each $k$, the set of products of $k$ commutators is connected. These connected sets each contain the identity element, so the union of all of them is connected as well; that is, $G'$ is connected.
\end{proof}
\medskip

Now, using the fact that a simply connected Lie group is path-connected and hence connected, we have the following result. 

\begin{theor} If $G$ is a simply connected real Lie group, then $G$ has a simply connected integral if and only if its Lie algebra has an integral.
\end{theor}
\begin{proof} Let $G$ have simply connected integral $H$ and let ${\mathfrak h}$ be the Lie algebra of $H$. It follows from Theorem \ref{green} that $[{\mathfrak h},{\mathfrak h}]$ is the Lie algebra of $H'=G$, and so the Lie algebra of $G$ has integral ${\mathfrak h}$.
\par

Now suppose that $G$ has Lie algebra ${\mathfrak g}$ and that ${\mathfrak g}=[{\mathfrak h},{\mathfrak h}]$. Let $H$ be the unique simply connected Lie group with Lie algebra ${\mathfrak h}$, which exists by Lie's third theorem (\cite[Theorem 5.25]{hall}). Then the Lie algebra of $H'$ is $[{\mathfrak h},{\mathfrak h}]$, by Theorem \ref{green} again. Thus, $H'$ and $G$ have the same Lie algebra. Since both of these Lie groups are connected, using Lemma \ref{wofsey}, it follows from Lie's subgroups-subalgebras Theorem (see e.g. \cite[Theorem 5.20]{hall}) that $G=H'$ and $G$ has a simply connected integral.
\end{proof}

Now we deal with Problem 10.21 in \cite{accm}, namely, ``Is there a ring theoretic analogue of the results in this paper,
now taking $[a, b] = ab - ba$? Observe that in general, the commutators of all pairs of elements in a ring
form a subring, but they do not necessarily form an ideal. Therefore the ring
theory literature considers the commutator as the ideal generated by all pairs
$[a, b]$. Nevertheless, they form a right ideal if and only if they form a left ideal:
$(ab - ba)c = a(bc - cb) + (ac)b - b(ac)$.".  
\par

As a matter of fact, the situation for rings is quite different from that of groups or Lie algebras. Indeed, we shall shortly show that even  the basic fact that abelian groups have integrals does not admit a ring theoretical analogue.  We recall some terminology and notation. 

Let $R$ be a ring. We denote by $[R,R]$ the associative ideal of $R$ denoted by the Lie commutators $[x,y]=xy-yx$, for all $x,y \in R$.  We define by induction a chain of associative ideals of $R$ as follows:
\begin{align}\nonumber
\delta^{(0)}(R)&=R,\\ \nonumber
\delta^{(n)}(R)&=[\delta^{(n-1)}(R),\delta^{(n-1)}(R)]R.
\end{align}

Following Jennings \cite{J}, we say that $R$ is \emph{solvable}
 if $\delta^{(n)}(R)=0$ for some $n$. 
Clearly, if the ring $R$ is solvable, 
  then $R$ is also solvable as a Lie algebra,
  but the converse is in general not true. Note also that, if $\bar R$ is the Dorroh extension of $R$, then $\delta^{(n)}(\bar R)=\delta^{(n)}(R)=0$ for all $n>0$, and $R$ is solvable if and only so is $\bar R$.

\begin{prop} If a solvable ring $R$ is a derived ring, then $R$ is nilpotent. In particular, a nonzero commutative ring with a unity is not a derived ring.
	\end{prop}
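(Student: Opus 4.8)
The plan is to unwind the definition of a derived ring, reduce to the unital case, settle the ``in particular'' clause at once, and then focus on the implication \emph{solvable $+$ derived $\Rightarrow$ nilpotent}. Write $R=[H,H]$, where $[H,H]$ is the associative ideal generated by all $xy-yx$. Passing to the Dorroh extension I may assume $H$ has an identity: the identity is central, so adjoining it alters neither the commutator ideal $[H,H]=R$ nor, by the remark preceding the statement, the solvability of $R$. The final sentence then follows formally: a nonzero commutative ring $R$ with identity is solvable, since $[R,R]=0$ forces $\delta^{(1)}(R)=0$; were it a derived ring it would be nilpotent by the first assertion, whence $1\in R^{n}=0$ for some $n$, a contradiction. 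So everything reduces to showing that a solvable derived ring is nilpotent.

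For the main implication I would work from the bottom of the solvable series together with the prime radical $N$ of $R$. First I would treat the semiprime case $N=0$: if $\delta^{(m)}(R)=0$ with $I:=\delta^{(m-1)}(R)\neq 0$, then $[I,I]R=0$, and since in a semiprime ring the right annihilator of the whole ring is zero (from $xR=0$ one checks the ideal generated by $x$ squares to $0$), this forces $[I,I]=0$; thus the ideal $I$ is commutative and hence, by the standard fact that a commutative ideal of a semiprime ring is central, $I\subseteq Z(R)$. The aim is to combine this central ideal with the fact that $R$ is a commutator ideal to collapse $R$ to $0$, so that in general $R/N$ vanishes, i.e.\ $R$ is nil. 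One would then have to upgrade ``nil'' to ``nilpotent'' using the finitely many terms of the $\delta$-series and the identity $[ab,c]=a[b,c]+[a,c]b$, which expresses products of elements of $R$ back through commutators and lets solvability be read off against the powers $R^{n}$.

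The hard part is exactly the gap between solvable and nilpotent, which for rings is genuine: commutative rings, and commutative nil rings that fail to be nilpotent, are solvable but not nilpotent. Hence the hypothesis $R=[H,H]$ must be used in an essential, global way and cannot be circumvented by any formal manipulation of the $\delta$-series. This is compounded by the fact that the terms $\delta^{(k)}(R)$ are ideals of $R$ but \emph{not} in general ideals of $H$ --- indeed $[R,R]$ need not be $H$-invariant, since $h[r,s]=[hr,s]-[h,s]r$ with $[h,s]r$ only landing in $R^{2}$ --- so the derived structure $R=[\,\cdot\,,\cdot\,]$ does not descend to the successive quotients and a naive induction on the solvable length fails. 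Reconciling the commutator generation of $R$ with the weakness of solvability, so as to force true nilpotency, is where the real content of the proof must lie.
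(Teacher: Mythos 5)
Your two reductions are fine and coincide with the paper's: passing to the Dorroh extension of the integral $H$ leaves the commutator ideal $[H,H]=R$ unchanged (commutators do not involve the adjoined identity), so one may assume $H$ is unital; and the ``in particular'' clause does follow formally from the main assertion exactly as you say. But the main assertion itself --- solvable $+$ derived $\Rightarrow$ nilpotent --- is never proved in your text; what you offer is a plan, and you concede as much (``the aim is to combine\ldots'', ``one would then have to upgrade\ldots'', ``where the real content of the proof must lie''). Concretely, at least three steps are missing. In the semiprime case you establish only that the last nonzero term $I=\delta^{(m-1)}(R)$ is a central ideal; no argument is given that this, together with $R=[H,H]$, forces $R=0$, and any such argument must use the derived hypothesis essentially, since $\mathbb{Z}$ is semiprime, solvable and nonzero. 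Next, to pass to the general case you need $R/N$ ($N$ the prime radical) to still be a derived ring, i.e.\ you need $N$ to be an ideal of $H$; this is true (the prime radical is hereditary on ideals, $N=R\cap \mathrm{Nil}_*(H)$), but you do not address it, and indeed you point out yourself that ideals of $R$ need not be $H$-invariant. Finally, the upgrade from ``$R$ nil'' to ``$R$ nilpotent'' is asserted to follow from the finitely many terms of the $\delta$-series and the identity $[ab,c]=a[b,c]+[a,c]b$, but solvability alone cannot yield this (a commutative nil non-nilpotent ring is solvable), so this step again requires the derived structure and is not supplied.

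The paper closes precisely this gap by citation rather than by a new argument: once $S$ is unital, $R=[S,S]=\delta^{(1)}(S)$, and Jennings' theorem (Theorem 5.5 of his paper on central chains of ideals) states that a solvable commutator ideal $\delta^{(1)}(S)$ of a unital ring is nilpotent. In other words, everything beyond the Dorroh reduction is delegated to Jennings' result, which is exactly the ``real content'' you correctly identify but do not prove. As it stands, the proposal is an accurate diagnosis of where the difficulty lies, not a proof of the proposition.
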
 
	\begin{proof} Suppose that $R$ is the derived ring of $S$. In view of the remark above, we may assume that $S$ is a unital ring. As $\delta^{(1)}(S)=R$ is solvable, it follows from \cite[Theorem 5.5]{J} that $R$ is nilpotent, yielding the desired conclusion.
	\end{proof}



\begin{thebibliography}{1}

\bibitem{accm} J. Ara\'{u}jo, P.J. Cameron, C. Casolo and F. Matucci, `Integrals of groups', {\it Israel J. Math.} {\bf 234} (2019), 149-178.

\bibitem{bemn} J.C. Benjumea, F.J. Echarte, M.C. M\'{a}rquez and J. N\'{u}\~{n}ez, `Links among characteristically nilpotent $C$-graded and derived filiform Lie algebras', {\it Rocky Mountain J. Math.} {\bf 35 (4)} (2005), 1081-1098.


\bibitem{bokut} L. A. Bokut: `On nilpotent Lie algebras', Algebra and Logic {\bf 10} (1971), 13--168.

\bibitem{bt} K. Bowman and D.A. Towers, `On almost nilpotent-by-abelian Lie algebras', {\it Lin. Alg. Appl.} {\bf 247} (1996), 159-167.


\bibitem{chao} C.-Y. Chao, `A nonimbedding theorem of nilpotent Lie algebras', {\it Pacific J. Math.} {\bf 22 (2)} (1967), 231-234.

\bibitem{CN} F.J. Castro-Jim\'enez J. Nunez Vald\'es:  `On characteristically nilpotent filiform Lie algebras of dimension $9$', Comm. Algebra {\bf 23} (1995),  3059--3071.


\bibitem{DL} J. Dixmier and W.G. Lister: `Derivations of nilpotent Lie algebras', \emph{Proc. Amer. Math. Soc.} {\bf 8} (1957), 155 -- 157.

\bibitem{dzu} A.S. Dzumadil'daev, `Irreducible representations of strongly solvable Lie algebras over a field of positive characteristic', {\it Math. USSR Sbornik} {\bf  51 (1)} (1985), 207-223.

\bibitem{green} L. Greenberg, `Commutator groups and algebras'.  {\it Journal of Research of the National Bureau of Standards — B. Mathematical Sciences} {\bf 73B (3)} (1969), 247-249: https://nvlpubs.nist.gov/nistpubs/jres/$73B$/jresv$73Bn3p247\_A1b$.pdf.

\bibitem{hall} B.C. Hall, `Lie groups, Lie algebras, and Representations: an elementary introduction'. Graduate texts in Mathematics, vol. 222 (2nd edition), Springer (2015), doi:10.1007/978-3-319-13467-3, ISBN 978-3319134666.

\bibitem{jac} N. Jacobson, `Lie Algebras', Interscience, New York-London (1962).

\bibitem{jacobson} {N. Jacobson}: `A note on automorphisms and derivations of Lie algebras', {\it Proc. Amer. Math. Soc.} {\bf 6} (1955), 28--283.

\bibitem{J} S.A. Jennings: `Central chains of ideals in an associative ring',  {\it Duke Math. J. } {\bf 9} (1942), 341-355.

\bibitem{L} G.F. Leger:  `A characteristically nilpotent Lie algebra can be a derived algebra?, Proc. Amer. Math. Soc. {\bf 56} (1976), 42--44.

\bibitem{LT} G.F. Leger and S. T\^og\^o: `Characteristically nilpotent Lie algebras', {\it Duke Math. J.} {\bf 26} (1959), 623 -- 628.

\bibitem{schenk} E. Schenkman, `A theory of subinvariant Lie algebras', {\it Amer. Math. J.} {\bf 73 (2)} (1951), 453-474.

\bibitem{stit} E. Stitzinger, `A nonimbedding theorem for algebras', {\it Proc. Amer. Math. Soc,} {\bf 50 (1)} (1975), 10-13.



\bibitem{yam} H. Yamabe, `On an arc-wise connected subgroup of a Lie group', {\it Osaka Math. J.} {\bf 2 (1)} (1950), 13-14.





\end{thebibliography}
\end{document}